\newtheorem{thm}{Theorem}[section]
\newtheorem{lem}[thm]{Lemma}
\newtheorem{prop}[thm]{Proposition}
\theoremstyle{definition}
\numberwithin{equation}{section}
\numberwithin{figure}{section}
\newcommand{\inv}{^{-1}}
\newcommand{\del}{\partial}
\renewcommand{\a}{\alpha}
\newcommand{\calc}{{\mathcal{C}}}
\newcommand{\bbz}{{\mathbb{Z}}}
\newcommand{\bbq}{{\mathbb{Q}}}
\newcommand{\calt}{{\mathcal{T}}}
\newcommand{\caln}{{\mathcal{N}}}
\newcommand{\calp}{{\mathcal{P}}}
\newcommand{\cfkinfty}{CFK^\infty}
\newcommand{\sss}{{\mathfrak{s}}}
\newcommand{\spinc}{\mathrm{spin^c}}
\newcommand{\spin}{\mathrm{spin}}
\newcommand{\consum}{\mathop{\Large {\#}}}
\title{Structure in the Bipolar Filtration of Topologically Slice Knots}
\author{Tim D. Cochran$^\dag$}
\address{Department of Mathematics MS-136, P.O. Box 1892, Rice University, Houston, TX 77251-1892}
\email{cochran@rice.edu}
\urladdr{http://math.rice.edu/~cochran}
\thanks{$\dag$ Partially supported by the National Science Foundation DMS-1006908}
\author{Peter D. Horn$^\ddag$}
\address{Department of Mathematics\\Syracuse University\\215 Carnegie Building\\Syracuse, NY 13244-1150}
\email{pdhorn@syr.edu}
\urladdr{http://pdhorn.mysite.syr.edu/}
\thanks{$\ddag$ Partially supported by the NSF DMS-1205922 and NSF Postdoctoral Fellowship DMS-0902786}
\begin{document}

\begin{abstract}
	
	Let $\mathcal{T}$ be the group of smooth concordance classes of topologically slice knots and suppose 
 \[ \cdots \subset \calt_{n+1} \subset \calt_n \subset \cdots \calt_2 \subset \calt_1 \subset \calt_0 \subset \calt \] 
is the  \textit{bipolar filtration of}  $\calt$. We show that $\mathcal{T}_0/\mathcal{T}_1$ has infinite rank, even modulo Alexander polynomial one knots.  Recall that knots in $\mathcal{T}_0$ (a topologically slice $0$-bipolar knot) necessarily have zero $\tau$, $s$ and $\epsilon$-invariants. Our invariants are detected using certain $d$-invariants associated to the $2$-fold branched covers.
\end{abstract}

%In particular we exhibit an infinite family of topologically slice knots, each of which is $0$-bipolar, and no linear combination %of which is concordant to either a $1$-bipolar knot or a knot with trivial Alexander polynomial.

\maketitle

\section{Introduction}\label{sec:Introduction}

Research into $4$-dimensional manifolds in the last $30$ years  has revealed a vast difference between the topological and smooth categories. There is a purely local paradigm for this disparity in terms of knot theory. Given a \textit{knot} $K$ in $S^3\equiv \partial B^4$, it can happen that there is a $2$-disk, $\Delta$, topologically  embedded in the $4$-ball so that $\partial \Delta=K$ but there exists no smoothly embedded disk with $K$ as boundary.  This motivates the study of which knots in $S^3$ bound embedded $2$-disks in $B^4$ in the topological category, but not the smooth category. This paper contributes to the on-going efforts to quantify and classify this difference in categories.

To be more precise, the set of oriented knots in $S^3$ comes equipped with a binary operation called connected sum (denoted $\#$).  Given a knot $K$ let $-K$ denote the knot obtained by taking the mirror-image and reversing the circle's orientation.  Two knots $K$ and $J$ are \textit{smoothly concordant} if there is an annulus $A$ properly and smoothly embedded in $S^3\times[0,1]$ with $A \cap S^3\times\{1\} = K$ and $A  \cap S^3\times\{0\} = -J$.  The set of equivalence classes is a group, called the \emph{smooth knot concordance group}, denoted $\calc$.  The inverse of $[K]$ is $[-K]$, and the identity element is the class of \emph{slice knots}, i.e. knots which bound properly and smoothly embedded disks in the four-ball. There is a weaker equivalence relation that determines the \emph{topological concordance group}, wherein one ignores the differentiable structures and only requires the annulus to be topologically locally flatly embedded instead of smoothly embedded. Knots that bound topologically locally flat discs in a topological manifold homeomorphic to the $4$-ball  are called \emph{topologically slice knots}.  Let $\calt < \calc$ denote the subgroup of smooth concordance classes of topologically slice knots. Thus the group $\calt$ measures the difference between the smooth and topological categories. Endo established in ~\cite{Endo} that $\calt$ itself is large; it has infinite rank as an abelian group.

In~\cite{CHH:Filt}, the authors and Harvey introduced the \textit{bipolar filtration of} $\calt$:
 \[ \cdots \subset \calt_{n+1} \subset \calt_n \subset \cdots \calt_2 \subset \calt_1 \subset \calt_0 \subset \calt \] 
which is obtained as the restriction to $\calt$ of the \textit{bipolar filtration of }$\calc$, defined as follows. A knot $K$ is \emph{$n$-positive} if it bounds a smoothly embedded $2$-disk, $D$, in a smooth four-manifold $V$ (with $\partial V\cong S^3$) that satisfies:
	\begin{itemize}
		\item $\pi_1(V)=0$,
		\item the intersection form on $H_2(V)$ is positive definite, and
		\item $H_2(V)$ has a basis represented by a collection of surfaces $\{S_i\}$ disjointly embedded in the complement of the slice disc and $\pi_1(S_i)\subset \left(\pi_1(V-D)\right)^{(n)}$.
	\end{itemize}
The first two conditions are equivalent to saying that	$V$ is a smooth manifold homeomorphic to a punctured connected sum of copies of $\mathbb{C}P(2)$.  An \emph{$n$-negative} knot is defined by replacing the word `positive' with `negative' above.  The four-manifold $V$ is called an \emph{$n$-positon} (respectively, an \emph{$n$-negaton}).  The set of $n$-positive (respectively, $n$-negative) knots is denoted $\calp_n$ (respectively, $\caln_n$).  A knot is \emph{$n$-bipolar} if it is both $n$-positive and $n$-negative.  The set of concordance classes of $n$-bipolar knots forms a subgroup of $\calc$.  This induces a filtration of $\calt$ by defining $\calt_n$ as the intersection of $\calt$ with $n$-bipolar knots. It was conjectured in ~\cite{CHH:Filt} that $\calt_n/\calt_{n+1}$ is non-trivial for every $n$. As evidence, it was shown there, using a m\'elange of techniques from smooth and topological concordance,  that $\calt_1/\calt_2$ has positive rank ~\cite[Theorem 8.1]{CHH:Filt} and it was observed that  Endo's examples generate an infinite rank subgroup of $\calt/\calt_0$ ~\cite[Theorem 4.7]{CHH:Filt}.  The main result of the present paper is: 

\begin{thm}\label{thm:MainTheoremBipolar} $\calt_0/\calt_1$ has infinite rank. 
\end{thm}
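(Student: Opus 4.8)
The plan is to produce an infinite family of topologically slice knots that lie in $\calt_0$ (so they are $0$-bipolar) but whose images in $\calt_0/\calt_1$ are linearly independent, even after quotienting by Alexander polynomial one knots. Since the abstract tells us the invariants come from $d$-invariants of the $2$-fold branched covers $\Sigma_2(K)$, I would build the examples as knots obtained by a satellite or infection construction on a fixed topologically slice seed (for instance a Whitehead double of a knot with nontrivial $\tau$ or $s$, modified to kill those invariants), arranged so that $\Sigma_2(K)$ is a rational homology sphere whose Heegaard Floer $d$-invariants, or more refined correction-term data, can be computed. The first step is to verify these knots are topologically slice (via Freedman's theorem, which requires Alexander polynomial one for the relevant infection curves or patterns) and that they lie in $\calt_0$; the latter requires explicitly exhibiting $0$-positons and $0$-negatons, i.e. simply-connected positive/negative definite $4$-manifolds bounding the zero-surgery or containing the slice disk complement, with the $\pi_1$ condition on the surfaces being vacuous at level $n=0$ beyond connectivity.

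Next I would develop the obstruction. The key is that membership in $\calt_1$ constrains the $d$-invariants of the branched double cover. Concretely, if $K \in \calt_1$ then $\Sigma_2(K)$ bounds both a positive-definite and a negative-definite $4$-manifold (obtained by taking the double branched covers of a $1$-positon and a $1$-negaton, with the deeper $\pi_1$ condition giving control over the induced $\spinc$ structures), and the depth-$1$ condition forces vanishing or sharp inequalities among the correction terms $d(\Sigma_2(K),\sss)$ over the spin$^c$ structures $\sss$. I would extract a homomorphism $\calt_0/\calt_1 \to G$ for some ordered abelian group $G$ (likely a product of $\bbq$'s or a group built from $d$-invariants of an infinite cyclic cover or an iterated covering) that detects the sum-of-$d$-invariants type quantity, and show it factors through $\calt_1$ trivially by the definiteness-sandwich argument. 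To get infinite rank I would choose the infection data so that the values of this homomorphism on the family are $\bbq$-linearly independent, typically by an inductive/triangular argument where the $n$-th knot introduces a new prime or a new surgery coefficient so that its $d$-invariant contribution is not expressible in terms of the previous ones.

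The main obstacle, I expect, is the obstruction half rather than the construction: one must show that the depth-$1$ bipolar condition (not merely $0$-bipolar, and not merely topological sliceness) produces a usable constraint on the branched-cover $d$-invariants. The difficulty is twofold. First, passing to the $2$-fold branched cover of an $n$-positon requires controlling how the $\pi_1$-depth condition on the embedded surfaces $S_i$ descends to the cover and interacts with the $\spinc$ structures and the metabolizer of the linking form on $H_1(\Sigma_2(K))$; the commutator-subgroup condition at level $n=1$ is precisely what should let a covering-space or von Neumann/Heegaard-Floer argument detect a secondary invariant invisible at level $0$. Second, one must ensure the constructed family genuinely escapes level $0$ detection, i.e. that all classical and level-$0$ invariants ($\tau$, $s$, $\epsilon$, the ordinary $d$-invariant inequalities) vanish on these knots, so that the branched-cover $d$-invariants are doing essential new work; verifying this vanishing while keeping the deeper invariant nonzero is the delicate balancing act. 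I would therefore spend most of the effort setting up the precise $d$-invariant inequality satisfied by knots in $\calt_1$ and then engineering the satellite family to violate it in infinitely many independent ways.
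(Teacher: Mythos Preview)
Your broad strategy matches the paper's: construct a family of topologically slice knots $K_p$ via a satellite of a Whitehead double, verify $K_p\in\calt_0$, and obstruct $\calt_1$-membership using $d$-invariants of the double branched covers, with distinct primes controlling the metabolizers to obtain linear independence. Two points, however, deserve correction.

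First, you misidentify the main obstacle. The $d$-invariant obstruction for $\caln_1$ (and dually for $\calp_1$) is not something you need to develop: it is already established in \cite{CHH:Filt} (Theorem~6.5 there, quoted here as Theorem~\ref{thm:T1Obstruction}). The paper simply invokes it. The real work is the \emph{computation}: the double branched cover $\Sigma_p$ is identified as $S^3_{(2p+1)^2/2p}(D\# D)$, and one shows that $CFK^\infty(D\# D)$ agrees with $CFK^\infty(T_{2,5})$ up to an acyclic summand, so the Ozsv\'ath--Szab\'o rational surgery formula for $L$-space knots yields explicit values $d(\Sigma_p,i_k)\in\{0,-2\}$ on the metabolizer $\spinc$ structures. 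Your sketch does not anticipate this reduction, which is the technical heart of the argument.

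Second, your plan to ``extract a homomorphism $\calt_0/\calt_1\to G$'' is a genuine gap. The obstruction of Theorem~\ref{thm:T1Obstruction} is \emph{not} additive: it asserts that for $K\in\caln_1$ there \emph{exists} a metabolizer on which all the relevant $d$-invariants are nonnegative, and existential quantification over metabolizers does not behave linearly under connected sum. The paper's linear-independence argument does not go through a homomorphism. Instead it exploits an asymmetry: each $K_q$ lies in $\calp_2\subset\calp_1$ (not merely $\calp_0$), so $-K_q\in\caln_1$. Given a relation $K=\#_p\, n_pK_p\in\calt_1$ with some $n_q>0$, one forms $J=K\#(-(n_q-1)K_q)\in\caln_1$, which has exactly one $K_q$ summand. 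Choosing the primes $2p+1$ distinct forces any metabolizer of $H_1(\Sigma_J)$ to split, and on the $\Sigma_q$ factor the metabolizer is unique; the calculation then produces a $\spinc$ structure with $d=-2$, contradicting Theorem~\ref{thm:T1Obstruction}. Your ``triangular'' picture is in the right spirit, but without the $\calp_1$-asymmetry trick you would not be able to reduce to a single copy of $K_q$, and the argument would not close.
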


\noindent Recall that any knot in $\mathcal{T}_0$ necessarily has zero $\tau$, $s$ and $\epsilon$-invariants and vanishing Fintushel-Stern-Hedden-Kirk invariant ~\cite[Prop. 1.2]{CHH:Filt}. This makes such knots difficult to detect.

This theorem is proved by considering the infinite family of knots $\{K_p\}$ below and analyzing certain $d$-invariants associated to their $2$-fold branched covers. These invariants have been shown to obstruct membership in $\calt_1$ ~\cite[Corollary 6.9]{CHH:Filt}. Our calculation relies on Ozsv\'ath-Szab\'o's $d$-invariant formula for rational surgeries on $L$-space knots. Let $D$ denote the positively-clasped, untwisted Whitehead double of the right-handed trefoil and let $U$ denote the trivial knot. Let $R_p$ denote the ribbon knot on the right side of Figure~\ref{fig:knot} (there is an unknotted, untwisted curve going once over each band on the obvious genus one Seifert surface for $R_p$).  If we tie the $(p+1)$-twisted band of $R_p$ into $D$, we obtain the topologically slice knot $K_p$, pictured on the left side of Figure~\ref{fig:knot}.

\begin{figure}[!ht]
	\begin{center}
		\begin{overpic}[scale=1]{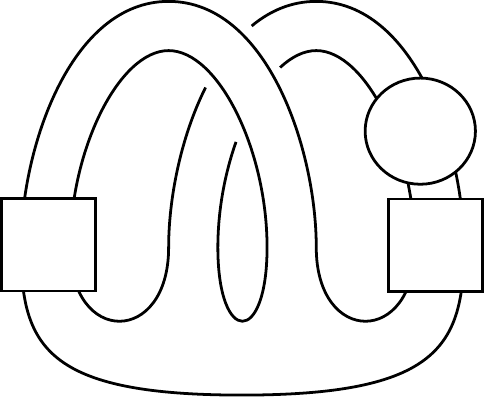}
			\put(4, 30){$-p$}
			\put(82, 30){$p+1$}
			\put(84, 53){$D$}
		\end{overpic}
		\hspace{1cm}
		\begin{overpic}{knot.pdf}
			\put(4, 30){$-p$}
			\put(82, 30){$p+1$}
			\put(84, 53){$U$}
		\end{overpic}
		\caption{The knots $K_p$ and $R_p$}
		\label{fig:knot}
	\end{center}
\end{figure}

Furthermore we strengthen Theorem~\ref{thm:MainTheoremBipolar} by showing that $\calt_0/\calt_1$ has infinite rank even ``modulo knots of Alexander polynomial one.'' Recall that any knot with Alexander polynomial one is a topologically slice knot ~\cite{FQ}. Let $\Delta$ denote the subgroup of $\calt$ consisting of knots that are smoothly concordant to Alexander polynomial one knots. Most early examples of topologically slice knots that are not smoothly slice were knots of Alexander polynomial one knots.  Indeed  recall that all of Endo's examples giving an infinite rank subgroup of $\calt/\calt_0$ have Alexander polynomial $1$ ~\cite{Endo}.    This suggested the question, ``Is $\Delta=\calt$?'' , that is to say can Alexander polynomial one knots account for all the subtlety of $\calt$?  Recently, Hedden-Livingston-Ruberman answered this question in the negative by exhibiting an infinite rank subgroup of $\calt/\Delta$ ~\cite[Theorem A]{HedLivRub}. In this direction we strengthen Theorem~\ref{thm:MainTheoremBipolar} as follows:

%Hedden-Livingston-Ruberman's ``Alexander polynomial one obstruction'' comes from Heegaard Floer homology $d$-invariants, which in certain cases can be computed (or at least bounded) via a careful analysis of the knot Floer complex of certain knots.  To compute all of the appropriate $d$-invariants by brute force, Hedden-Livingston-Ruberman would have had to consider knot Floer complexes $\cfkinfty(L_p)$ with minimal generating sets containing $(2p-3)15^{(3p-1)/2}$ elements!  In the end, Hedden-Livingston-Ruberman avoided the daunting computation thanks to a clever analysis of metabolizers which reduced the burden of proof to showing that two $d$-invariants were distinct. 

\begin{thm}\label{thm:MainTheoremAlex} The family $\displaystyle \left\{K_p\right\}_{p = 1}^\infty$ generates an infinite rank subgroup of $\calt_0/\langle\calt_1,\Delta\rangle$. Thus in particular
\begin{itemize}
\item the family $\displaystyle \left\{K_p\right\}_{p = 1}^\infty$ generates an infinite rank subgroup of $\calt_0/\calt_1$; and
\item the family $\displaystyle \left\{K_p\right\}_{p = 1}^\infty$ generates an infinite rank subgroup of $\calt/\Delta$.
\end{itemize}
\end{thm}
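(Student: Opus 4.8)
\emph{Set-up and strategy.} The plan is to produce a single smooth-concordance invariant $\delta$, assembled from the $d$-invariants of the $2$-fold branched cover $\Sigma_2$, that vanishes on $\langle\calt_1,\Delta\rangle$ but takes $\bbq$-linearly-independent values on the family $\{K_p\}$. First I would record that each $K_p$ is topologically slice: it is topologically concordant to the ribbon knot $R_p$, being obtained from $R_p$ by infecting along the unknotted, null-homologous curve $\a$ with the topologically slice companion $D$. Since $\a$ has winding number $0$, the satellite formula gives $\Delta_{K_p}=\Delta_{R_p}\neq 1$, so no individual $K_p$ lies in $\Delta$. The more delicate point is $K_p\in\calt_0$, i.e.\ that $K_p$ is both $0$-positive and $0$-negative: the ribbon disk for $R_p$ together with resolving the clasp of the infecting $D$ (a positive-to-negative crossing change, realized in a punctured $\overline{\mathbb{C}P(2)}$) produces a $0$-negaton, while the $0$-positon requires a complementary construction exploiting the specific band framings $-p$ and $p+1$; one must remember that $D$ has $\tau(D)=1$ and is itself only $0$-negative, so $0$-positivity of $K_p$ cannot come from $D$ alone.

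\emph{The obstruction and its descent to the quotient.} By \cite[Corollary 6.9]{CHH:Filt}, membership of a knot $K$ in $\calt_1$ forces prescribed behaviour of the $d$-invariants of $\Sigma_2(K)$ on the $\spinc$ structures lying over a metabolizer of the linking form of $H_1(\Sigma_2(K))$. I would package this into a quantity $\delta(K)$ built from those $d$-invariants, normalized by taking differences so as to depend only on the torsion $\spinc$ structures, so that: (i) $\delta$ vanishes on $\calt_1$; (ii) $\delta$ is a smooth concordance invariant, since concordant knots have rational-homology-cobordant branched covers and $d$ is a rational homology cobordism invariant; and (iii) $\delta$ is insensitive to $\Delta$, because a knot in $\Delta$ is concordant to an Alexander-polynomial-one knot, whose branched double cover is an integral homology sphere contributing no torsion $\spinc$ structures and only a global shift that the normalization removes. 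Since $\Sigma_2(K\#K')=\Sigma_2(K)\#\Sigma_2(K')$ and $d$ is additive under connected sum, $\delta$ is, up to the $\calt_1$-obstruction, additive. Hence $\delta$ descends to $\calt_0/\langle\calt_1,\Delta\rangle$, and it suffices to evaluate it on integer combinations of the $K_p$.

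\emph{Computing the branched covers.} This is the computational heart. From the genus-one Seifert form $V+V^{T}=\left(\begin{smallmatrix}-2p&1\\1&2p+2\end{smallmatrix}\right)$ one gets $\det R_p=(2p+1)^2$, so $\Sigma_2(R_p)$ is a lens space with $|H_1|=(2p+1)^2$ (as $R_p$ is a double-twist, hence $2$-bridge, knot), realized as a rational surgery on the unknot. I would then show that infecting $R_p$ by $D$ along $\a$ replaces the two lifts $\tilde\a_1,\tilde\a_2$ of $\a$ by copies of $D$, realizing $\Sigma_2(K_p)$ as a rational surgery on a knot governed by $D$ and its companion torus knot $T_{2,3}$. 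Since $T_{2,3}$ is an $L$-space knot, the plan is to reduce the $d$-invariant computation to Ozsv\'ath--Szab\'o's rational-surgery $d$-invariant formula for $L$-space knots, isolating the effect of the positive clasp of $D$ (detected by $\tau(D)=1$) as a definite, nonzero shift of $\delta(K_p)$ away from the lens-space value, whose magnitude I can track as a function of $p$.

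\emph{Linear independence and the main obstacle.} To upgrade nonvanishing to infinite rank, suppose $\sum_i n_i K_{p_i}\in\langle\calt_1,\Delta\rangle$ with the $K_{p_i}$ distinct; additivity of $\delta$ then yields a linear relation among the contributions of the $\Sigma_2(K_{p_i})$. Passing to a subsequence with $2p_i+1$ equal to distinct primes makes the orders $(2p_i+1)^2$ pairwise coprime, so the torsion subgroups of the $H_1(\Sigma_2(K_{p_i}))$ — and the $\spinc$ structures carrying the $i$-th contribution to $\delta$ — are mutually independent, and a leading-term argument on the largest prime forces every $n_i=0$. This yields an infinite-rank subgroup of $\calt_0/\langle\calt_1,\Delta\rangle$, and the two displayed consequences are then immediate since that group surjects onto both $\calt_0/\calt_1$ and $\calt/\Delta$. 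I expect the main obstacle to be the third paragraph intertwined with this non-cancellation step: the rational-surgery $d$-invariant computation for the satellite must be carried out precisely enough both to certify $\delta(K_p)\neq 0$ and to match the metabolizer bookkeeping of \cite[Corollary 6.9]{CHH:Filt} $\spinc$ structure by $\spinc$ structure, while the ribbon geometry behind the $0$-positon of $K_p$ also demands care. Everything else is formal.
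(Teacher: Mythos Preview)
Your overall strategy---use $d$-invariants of $\Sigma_2$ and exploit coprimality of the $(2p+1)^2$ when $2p+1$ is prime---matches the paper, but there is a genuine gap in the mechanism, plus a factual reversal. The invariant $\delta$ you propose cannot be made into a homomorphism that vanishes on $\calt_1$. The obstruction from \cite[Theorem~6.5]{CHH:Filt} is an \emph{inequality}: if $K\in\caln_1$ then $d(\Sigma_2(K),\sss_0+\widehat z)\ge 0$ for all $z$ in \emph{some} metabolizer, and dually $K\in\calp_1$ forces $d\le 0$ on a possibly \emph{different} metabolizer; for $K\in\calt_1$ you therefore do not get $d=0$ on any canonical set of $\spinc$ structures, and there is no fixed abelian target for your $\delta$ anyway since different knots have different $H_1(\Sigma_2)$. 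The paper does not build a homomorphism; it argues by direct contradiction, and the key step you are missing is this: assuming $\sum_p n_pK_p\in\calt_1$ with $n_q>0$, one uses the \emph{independently established} fact that $K_q\in\calp_2\subset\calp_1$ (so $-K_q\in\caln_1$) to add $-(n_q-1)K_q$ and reduce to $J=K_q\mathbin{\#}\bigl(\text{rest}\bigr)\in\caln_1$. With exactly one copy of $K_q$ and the coprimality splitting of metabolizers, the explicit computation $d(\Sigma_q,i_q)=-2$ together with $d(\Sigma_p,i_0)=0$ for $p\ne q$ (Lemma~\ref{lem:CorTermCalc}) yields a metabolizer element with $d=-2<0$, contradicting $J\in\caln_1$. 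Without this reduction you would face the metabolizers of $n_q\Sigma_q$, which are not unique even for $2q+1$ prime, so your ``leading-term'' argument does not go through as stated.

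Secondary points: your treatment of $\Delta$ is incomplete, since for $T$ with $\Delta_T=1$ the $\bbz$-homology sphere $\Sigma_2(T)$ has a single $d$-invariant that need not vanish; the paper first deduces $T\in\calt_0$ from the relation and $K_p\in\calt_0$, then invokes \cite[Corollary~6.11]{CHH:Filt} to get $d(\Sigma_T,\sss_0)=0$. You also have the positon/negaton roles for $D$ reversed: $D\in\calp_1$ (the right-handed trefoil unknots by a positive crossing change), so $K_p\in\calp_2$ is the easy direction and $K_p\in\caln_0$ is the delicate one (Proposition~\ref{knotisnegative}), handled by a $+1$-framed blow-down that drops the band framing by $4$. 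Finally, the surgery knot for $\Sigma_p$ is $D\#D$, whose $d$-invariants agree with those of $T_{2,5}$ (via $\cfkinfty(D)\simeq\cfkinfty(T_{2,3})\oplus(\text{acyclic})$ and tensoring), not $T_{2,3}$.
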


Aside from strengthening Theorem~\ref{thm:MainTheoremBipolar},  the second point of Theorem~\ref{thm:MainTheoremAlex}  gives another (shorter) proof of the above result of Hedden-Livingston-Ruberman ~\cite[Theorem A]{HedLivRub}. Moreover the $K_p$ lie in $\calt_0$ whereas the examples of ~\cite{HedLivRub} do not.

\section{$K_p$ lies in $\calt_0$}\label{sec:zerobipolar}

In this section we show that if $p\geq 3$ then the knot $K_p$ lies in $\calt_0$.

The right-handed trefoil knot can be unknotted by changing one positive crossing so it lies in $\mathcal{P}_0$ ~\cite[Lemma 3.4]{CocLic}. Hence the Whitehead double $D$ of the right-handed trefoil knot lies in $\calp_1$ by ~\cite[Example 3.5]{CHH:Filt}. Since $K_p$ is a winding number zero satellite of $D$ with pattern the ribbon knot $R_p$,  $K_p$ lies in $\calp_2$,  by ~\cite[Corollary 3.4]{CHH:Filt}, and so in particular lies in $ \calp_0$.  Furthermore, $K_p$ is topologically slice since $D$ and $R_p$ are topologically slice. Hence $K_p\in\calt$. It remains only to show that  $K_p \in \caln_0$, which requires more work.

\begin{prop}\label{knotisnegative}
	If $p \geq 3$, then $K_p \in \caln_0$.
\end{prop}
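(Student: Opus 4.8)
The plan is to construct an explicit $0$-negaton for $K_p$, i.e. a smooth four-manifold $V$ homeomorphic to a punctured connected sum of copies of $\overline{\mathbb{C}P(2)}$ in which $K_p$ bounds a smoothly embedded disk, with the defining surfaces lying in the complement of the disk and (since $n=0$) subject only to the condition that $\pi_1(S_i)\subset\pi_1(V-D)$, which is automatic. The starting observation is that $K_p$ is obtained from the ribbon knot $R_p$ by tying the $(p+1)$-framed band into the companion $D$; since $R_p$ is ribbon it bounds a disk in $B^4$, and the satellite operation replaces a neighborhood of that band with the companion. So I would first understand how to build a negaton for $R_p$ that manifestly contains the Whitehead double $D$ inside a nicely controlled piece, and then see how tying in $D$ modifies the four-manifold.

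The key mechanism is the relationship between $n$-negativity and unknotting by negative crossing changes, together with the satellite formula. First I would recall that the right-handed trefoil $T$ lies in $\calp_0$ because it is unknotted by changing one positive crossing; its mirror, the left-handed trefoil, correspondingly lies in $\caln_0$. The Whitehead double $D$ of the right-handed trefoil was placed in $\calp_1$ using \cite[Example 3.5]{CHH:Filt}; I expect the analogous construction, or a blow-down argument, to locate $D$ (or an appropriate genus-one Seifert-surface-based cobordism for it) inside a negative-definite manifold. The heart of the argument is then to trace through the satellite construction \cite[Corollary 3.4]{CHH:Filt} on the negative side: one wants to show that tying the $(p+1)$-twisted band of the ribbon knot $R_p$ into $D$ keeps the resulting knot in $\caln_0$. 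The crucial arithmetic input is the sign of the framing: the band carries $p+1$ positive twists, so for $p\geq 3$ the relevant self-intersection contributions are such that, after the satellite/blow-up bookkeeping, the four-manifold one builds has a negative-definite intersection form rather than a positive-definite one. I would make this precise by writing $K_p$ as the image of $R_p$ under a satellite operation and verifying that the cobordism from the ribbon disk for $R_p$, together with a $0$-negaton for $D$ glued in along the band, assembles into a smooth manifold $V$ with $\partial V=S^3$, $\pi_1(V)=0$, and negative-definite $H_2$ spanned by disjoint spheres away from the disk.

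The main obstacle I anticipate is the sign/definiteness accounting, which is exactly where the hypothesis $p\geq 3$ must enter. The condition that $K_p$ be $0$-\emph{negative} rather than $0$-positive is delicate because the companion $D$ was built on the positive side, so naively gluing in a negaton for $D$ would not obviously cooperate. The resolution should be that $D$ bounds a disk in a manifold that is negative-definite once one accounts for the handle that realizes the $(p+1)$-twisting of the band: the positive twisting contributes generators to $H_2$ whose self-intersections can be arranged to be $-1$ after suitable blow-ups/blow-downs, and one needs $p+1$ large enough (hence $p\geq 3$) for the linking/framing data to force negative definiteness and for the spanning surfaces to be realizable as disjointly embedded spheres in the disk complement. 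The bound $p\geq 3$ is presumably not sharp for the geometry but is what the clean construction yields. I would organize the proof as: (1) exhibit the satellite description and a Kirby/handle picture of a candidate $V$; (2) verify $V$ is simply connected and homeomorphic to $\#\,\overline{\mathbb{C}P(2)}$ by computing the intersection form; (3) isotope the spanning surfaces off the slice disk; and (4) check the $\pi_1$-condition, which at level $n=0$ is vacuous.
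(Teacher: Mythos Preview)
You have correctly located the obstacle: $D$ sits on the \emph{positive} side ($D\in\calp_1$), so there is no evident negaton for $D$ to feed into the satellite construction of \cite[Corollary~3.4]{CHH:Filt}. But your proposed resolution is where the gap lies. The suggestion that ``the positive twisting contributes generators to $H_2$ whose self-intersections can be arranged to be $-1$'' is backwards---positive band-twisting produces positive self-intersections---and nothing in your outline explains how the condition $p\ge 3$ would flip the sign of the form. The satellite/gluing picture you sketch does not, as stated, produce a negative-definite manifold.

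The paper's argument bypasses the satellite formalism entirely and works directly with the Seifert surface. The key trick is to unknot the $D$-band by a single positive-crossing change realized by blowing down a $+1$-framed $2$-handle (rather than the customary $-1$). A local computation shows that this drops the self-linking of the band by $4$: the right-hand band becomes unknotted with framing $p+1-4=p-3$. The resulting knot $J_p$ now has a genus-one Seifert surface with both bands unknotted and with framings $-p$ and $p-3$; when $p\ge 3$ both are nonpositive, so $J_p$ can be unknotted by negative crossing changes and hence $J_p\in\caln_0$. The $+1$-handle gives a cobordism $W$ with intersection form $\langle +1\rangle$ in which $K_p$ is concordant to $J_p$ away from a generator of $H_2(W)$; reversing the orientation of $W$ turns this into $\langle -1\rangle$, and gluing on a $0$-negaton for $J_p$ yields a $0$-negaton for $K_p$. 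So the hypothesis $p\ge 3$ enters precisely as $p-3\le 0$, not through any count of blow-ups tied to $p+1$.
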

\begin{proof}
	Consider the right-hand band in the Seifert surface for the knot $K_p$ from Figure~\ref{fig:knot}, which is an annulus, $A$, whose core, $\a$, has the knot type of $D$.  The annulus $A$ is twisted in the sense that the self linking of $\a$ is ${\mathrm{sl}}(\a)=p+1$.
	
	One can change $D$ to the unknot by a single positive crossing change.  Typically one achieves this operation by blowing down a $-1$-framed $2$-handle since this does not change framings, but we will instead blow down a $+1$-framed handle.  This operation is pictured in Figure~\ref{fig:crossingchange}.  Let $BD(\a)$ and $BD(A)$ denote the results of $\a$ and $A$ after the blow down.  As a knot, $BD(\a)$ is simply the unknot, but the blow down procedure changes the framing of the annulus.  In fact, ${\mathrm{sl}}(BD(\a))= {\mathrm{sl}}(\a)-4 = p-3$.  This change in framing follows directly from the local computation pictured in Figure~\ref{fig:crossingchange}; the positive crossing turns negative (decreasing writhe by $2$), and two left-handed twists appear in the annulus.
	
	\begin{figure}[!ht]
		\begin{center}
			\begin{overpic}[scale=1]{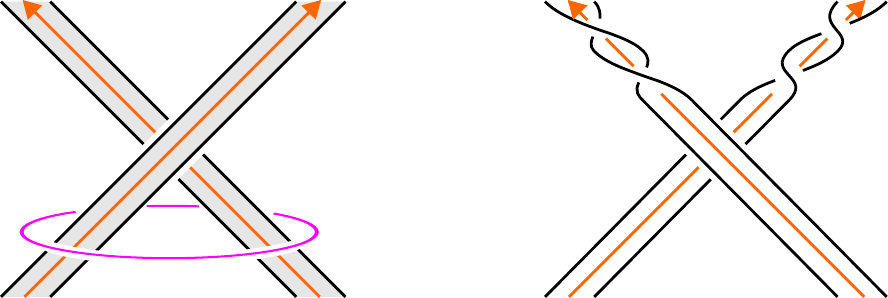}
				\put(37, 6.5){$+1$}
				\put(47, 15){\Huge $\leadsto$}
				\put(44.5, 12){blow down}
				\put(30, 20){\textcolor{BurntOrange}{$\a$} \ $\subset A$}
			\end{overpic}
			\caption{Changing a positive crossing by blowing down a $+1$}
			\label{fig:crossingchange}
		\end{center}
	\end{figure}
	
	Let $J_p$ denote the knot resulting after doing this single blow down on $K_p$.  Each of the left-hand and right-hand bands of $J_p$ is unknotted with framing $-p$ and $p-3$, respectively.  If $p\geq 3$, then $J_p$ can be unknotted by changing only negative crossings.  Hence $J_p \in \caln_0$~\cite[Lemma 3.4]{CocLic}.
	
	The cobordism $W$ from $S^3$ to $S^3$, induced by the $+1$-framed $2$-handle attachment has intersection form $\langle +1 \rangle$.  As in~\cite[Lemma 3.4]{CocLic}, we have that $K_p$ (in the $-S^3$ boundary component) is concordant to $J_p$ (in the $S^3$ boundary component) via a concordance which is disjoint from a generator of $H_2(W)$.  By reversing the orientation on $W$ and gluing on a $0$-negaton for $J_p$, we obtain a $0$-negaton for $K_p$, which finishes the proof.
\end{proof}

\section{Heegaard Floer correction terms}\label{HFCT}

In this section we introduce the $d$-invariants we will use to prove our main theorems and calculate them on our examples. These calculations are then called in the proofs in the next section.

Given a rational homology sphere $Y$ and a $\spinc$ structure $\sss$ on $Y$, Ozsv\'ath and Szab\'o defined the \emph{correction term (or $d$-invariant)} $d(Y,\sss) \in \bbq$~\cite[Definition 4.1.]{OzSz:AbsGrad}.  If $K$ is a knot in $S^3$ and $Y$ is a prime-power cyclic cover of $S^3$ branched along $K$, then $Y$ is a rational homology sphere~\cite{CassGord:SliceKnots}.  The correction terms obstruct a knot's lying in the first term of the bipolar filtration:

\begin{thm}\cite[Theorem 6.5]{CHH:Filt}\label{thm:T1Obstruction}
	If $K \in \caln_1$ and $Y$ is any prime-power cover of $S^3$ branched along $K$, then there exists a metabolizer\footnote{A metabolizer is a square-root order subgroup of $H_1(Y)$ on which the form $\mathrm{lk}:H_1(Y)\times H_1(Y) \to \bbq/\bbz$ vanishes.} $G < H_1(Y)$ for the $\bbq/\bbz$-valued linking form on $H_1(Y)$, and there is a $\spinc$ structure $\sss_0$ on $Y$ such that for each $z\in G$
	\[ d(Y, \sss_0 + \widehat{z}) \geq 0 \]
	where $\widehat{z}$ denotes the Poincar\'e dual of $z$.  For instance, we may take $\sss_0$ to be the $\spinc$ structure corresponding to any $\mathrm{spin}$ structure on $Y$.
\end{thm}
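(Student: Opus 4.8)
The plan is to pass to the $q$-fold branched cover and feed it into Ozsv\'ath--Szab\'o's inequality for negative definite fillings. Suppose $K\in\caln_1$, so $K$ bounds a smoothly embedded disk $D$ in a $1$-negaton $V$; recall $V$ is simply connected, negative definite, homeomorphic to a punctured connected sum of copies of $\overline{\mathbb{C}P(2)}$, and that $H_2(V)$ has a basis of surfaces $\{S_i\}$ embedded \emph{disjointly} in the complement of $D$, with $\p(S_i)\subset(\p(V-D))^{(1)}$. Fix a prime power $q$ and let $X=\Sigma_q(V,D)$ be the $q$-fold cyclic cover of $V$ branched along $D$, so $\del X=\Sigma_q(S^3,K)=Y$, a rational homology sphere by \cite{CassGord:SliceKnots}. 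The cover is classified by $\p(V-D)\to H_1(V-D)\cong\bbz\to\bbz/q$, and the first task is to pin down the algebraic topology of $X$.

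The homological analysis of $X$ is the heart of the matter, and it is exactly here that the $n=1$ derived condition is consumed. Since $\p(S_i)$ lies in the commutator subgroup $(\p(V-D))^{(1)}$, it maps trivially to the abelian deck group $\bbz/q$; hence the cover restricted to each $S_i$ is trivial and $S_i$ lifts to $q$ disjoint copies $\widetilde S_i^{(1)},\dots,\widetilde S_i^{(q)}$, each with normal bundle pulled back from that of $S_i$. Because the $S_i$ are pairwise disjoint, the intersection form of $V$ in this basis is diagonal, and being unimodular and negative definite it must equal $\langle -1\rangle^{b_2(V)}$; consequently the $q\cdot b_2(V)$ lifts are pairwise disjoint with self-intersection $-1$, realizing $\langle -1\rangle^{q\,b_2(V)}$. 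A transfer/Mayer--Vietoris computation (using $\p(V)=0$ and $\chi(X)=q\,\chi(V)-(q-1)$) gives $b_1(X)=0$ and $b_2(X)=q\,b_2(V)$, so these lifts span $H_2(X;\bbq)$ and, being a unimodular sublattice of full rank, in fact give $H_2(X;\bbz)/\mathrm{tors}\cong\langle -1\rangle^{q\,b_2(V)}$. Thus $X$ is a smooth negative definite four-manifold with $b_1=0$ whose intersection form is diagonal and unimodular; in particular it carries characteristic vectors of minimal square $-b_2(X)$, namely the PD lifts $(\pm1,\dots,\pm1)$.

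Next I would extract the metabolizer and the base $\spinc$ structure. From the exact sequence of $(X,Y)$ and Poincar\'e--Lefschetz duality, the image of the restriction $H^2(X)\to H^2(Y)\cong H_1(Y)$ is precisely $G:=\ker\bigl(H_1(Y)\to H_1(X)\bigr)$; since $Q_X$ is unimodular and $b_1(X)=0$, the same duality argument that produces a metabolizer from a rational homology ball shows $G$ is self-annihilating for the linking form and has order $\sqrt{|H_1(Y)|}$, i.e.\ is a metabolizer. Take $\sss_0$ to be the restriction to $Y$ of a $\spinc$ structure arising from a $\spin$ structure on $X$, so that $c_1(\sss_0)=0$ and $\sss_0$ lies in the image of $\spinc(X)\to\spinc(Y)$.

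Finally, for each $z\in G$ I would realize $\sss_0+\widehat z$ by a $\spinc$ structure $\mathfrak{t}_z$ on $X$ with $c_1(\mathfrak{t}_z)^2=-b_2(X)$. Since $\widehat z\in G=\mathrm{im}(H^2(X)\to H^2(Y))$, the element $\sss_0+\widehat z$ lies in the image of the restriction map on $\spinc$ structures; moreover $\mathfrak{t}_z$ may be taken to differ from a fixed $\spinc$ structure with short characteristic vector (square $-b_2(X)$, restricting to $\sss_0$) by a \emph{torsion} class in $H^2(X)$, which changes $c_1$ only by a torsion element and hence leaves $c_1^2$ equal to $-b_2(X)$. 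Ozsv\'ath--Szab\'o's bound for negative definite four-manifolds with rational homology sphere boundary \cite{OzSz:AbsGrad}, namely $c_1(\mathfrak{t}_z)^2+b_2(X)\le 4\,d(Y,\mathfrak{t}_z|_Y)$, then gives $0\le 4\,d(Y,\sss_0+\widehat z)$, as claimed. The main obstacle is the second paragraph: establishing that the lifted surfaces account for all of $H_2(X)$ and endow $X$ with a unimodular, diagonal, negative definite form together with $b_1(X)=0$. Everything downstream---the metabolizer, the availability of square-$(-b_2)$ characteristic vectors, and the torsion-invariance of $c_1^2$ along the $G$-coset---follows formally once this homological picture of the branched cover is in hand.
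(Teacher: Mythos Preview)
The paper does not prove this statement; it is quoted from \cite[Theorem~6.5]{CHH:Filt} and used as a black box. Your outline is essentially the argument of that reference: pass to the $q$-fold branched cover $X$ of the negaton, lift the disjoint basis surfaces (which is exactly where the $n=1$ derived-series hypothesis is spent) to see that $X$ is negative definite with intersection form $\langle -1\rangle^{q\,b_2(V)}$, extract the metabolizer $G$ from the long exact sequence of $(X,Y)$, and feed everything into the Ozsv\'ath--Szab\'o inequality for negative definite fillings.

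There is one genuine slip. You write ``Take $\sss_0$ to be the restriction to $Y$ of a $\spinc$ structure arising from a $\spin$ structure on $X$.'' But $X$ is \emph{not} spin: you have just argued that its intersection form is $\langle -1\rangle^{q\,b_2(V)}$, which is odd, so $w_2(X)\neq 0$. The correct construction---which you in fact gesture toward in your final paragraph---is to choose $\mathfrak t\in\spinc(X)$ with $c_1(\mathfrak t)$ a minimal characteristic covector $(\pm 1,\dots,\pm 1)$, set $\sss_0=\mathfrak t|_Y$, and then argue from unimodularity of $Q_X$ and the exact sequence of the pair that $c_1(\mathfrak t)$ maps to $0$ in $H^2(Y)$, so $\sss_0$ does correspond to a $\spin$ structure on $Y$. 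With $\sss_0$ built this way, your torsion argument for realizing each $\sss_0+\widehat z$ by some $\mathfrak t_z$ on $X$ with $c_1(\mathfrak t_z)^2=-b_2(X)$ goes through, and the Ozsv\'ath--Szab\'o bound gives the inequality. Apart from this conflation of ``$\spin$ on $X$'' with ``short characteristic vector on $X$,'' and the homological verification you rightly flag as the crux, the proposal is sound and matches the cited proof.
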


There is an analogous result if $K\in \calp_1$ ~\cite[Theorem 6.2]{CHH:Filt}. This motivates the following calculation of the correction terms for $\Sigma_p$, the $2$-fold cover of $S^3$ branched along $K_p$.

According to the Akbulut-Kirby method~\cite{AkKir}, the $2$-fold branched cover over our knot $K_p$ has a surgery diagram as in Figure~\ref{fig:DoubleCover} (we have used the fact that $D$ is reversible).  One may eliminate the unknotted component by the `slam-dunk' move ~\cite[Figure 5.30]{GompfStip} to see that $\Sigma_p = S^3_{r}(D\# D)$ where $r = (2p+1)^2/2p$.

\begin{figure}[!ht]
	\begin{center}
		\begin{overpic}{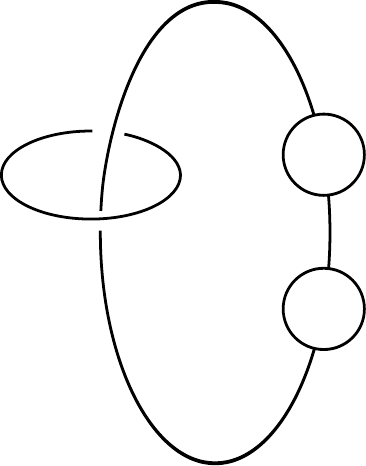}
			\put(0, 75){$-2p$}
			\put(67, 64){$D$}
			\put(67, 31){$D$}
			\put(10, 0){$2p+2$}
			\put(83, 50){$= S^3_{\frac{(2p+1)^2}{2p}}(D\# D)$}
		\end{overpic}
		\caption{The double branched cover $\Sigma_p$ of $K_p$}
		\label{fig:DoubleCover}
	\end{center}
\end{figure}
By Appendix~\ref{sec:cfkinfty}, the correction terms of any positive rational surgery on $D\# D$ agree with those of the same surgery on the torus knot $T_{2,5}$. Moreover every positive torus knot admits lens space surgery~\cite{Moser}, so we may apply~\cite[Theorem 1.2]{OzSz:RatSurg} to calculate the correction terms of $p/q$ surgery on $T_{2,5}$ as long as $p/q \geq 2\,g(T_{2,5})-1 = 3$:
\begin{equation}\label{eq:CTOLSK}
	d\left( S^3_{p/q} \left( T_{2,5} \right) , i \right) - d\left( S^3_{p/q}(U) , i \right) = -2\,t_{\left| \left\lfloor i/q \right\rfloor \right|} \left( T_{2,5} \right)
\end{equation}
for each $|i| \leq p/2$, and where $t_i \left( T_{2,5} \right)$ is the \emph{torsion coefficient}, which is determined by the symmetrized Alexander polynomial $\displaystyle \Delta\left( T_{2,5} \right) = a_0 + \sum_{i > 0} a_i\left( t^i + t^{-i} \right) = 1 - \left(t + t\inv \right) + \left(t^2 + t^{-2} \right)$ according to \[ t_i\left( T_{2,5} \right) = \sum_{j > 0} j\,a_{|i|+j} \]  The torsion coefficients of $T_{2,5}$ are $t_0 = 1$, $t_1 = 1$, and $t_i = 0$ for $i \geq 2$.  We remark that the `$i$' in Equation~\ref{eq:CTOLSK} is not a $\spinc$ structure but rather is an integer that labels a certain $\spinc$ structure.  This subtle point is discussed further in Appendix~\ref{app:spinc}.

We return to the calculation of the correction terms for $\Sigma_p$.  We require the technical assumption that $2p+1$ is prime, so that $H_1 \left( \Sigma_p \right) \cong \bbz/(2p+1)^2\bbz$ has a unique subgroup of order $2p+1$ (this subgroup is a metabolizer for the linking form).  Let $G < H_1(Y)$ denote this subgroup.  Following the notation of Theorem~\ref{thm:T1Obstruction}, we will calculate the correction terms $d(\Sigma_p, \sss_0 + \widehat{z})$ for each $z\in G$.  By Lemma~\ref{lem:CentralLabel} and the discussion in Appendix~\ref{app:spinc}, the $\spinc$ structures $\sss_0 + \widehat{z}$ have labels $(2p^2 + 2p + 1)(2p - 1) + (2p+1)k$ for $k=0,1,\ldots{2p}$.  In order to to use Equation~\ref{eq:CTOLSK}, we need each label $i$ to satisfy $|i| \leq (2p+1)^2/2$.  These labels range from $4p^3+2p^2-1$ to $4p^3+6p^2+2p-1$ and must therefore be shifted down by a suitable multiple of $(2p+1)^2$.  Subtract $p(2p+1)^2$ from each label to arrive at the new labels 
\[ i_k := -2p^2 -p -1 + k (2p + 1) \hspace{1cm} k = 0,1,\ldots 2p\]
One easily checks that each of these new labels is less than $(2p+1)^2/2$ in absolute value.  Note that $i_0$ is the label of the $\spinc$ structure $\sss_0$ which corresponds to the unique $\spin$ structure on $\Sigma_p$.

\begin{lem}\label{lem:CorTermCalc}
	Let $i_k$ denote the label $-2p^2 -p -1 + k (2p + 1)$ for $k=0,1,\ldots,2p$.  The correction terms $d \left( \Sigma_p, i_k \right)$ are
	\[ d \left( \Sigma_p, i_k \right) = \begin{cases}
											-2, & \textrm{if }k=p,p+1\\
											0, & \textrm{otherwise}
										\end{cases}\]
\end{lem}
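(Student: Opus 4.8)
The plan is to combine the rational surgery formula \eqref{eq:CTOLSK} with the vanishing of correction terms for the branched cover of a slice knot. First I would use the reduction supplied by Appendix~\ref{sec:cfkinfty}: since the correction terms of any positive rational surgery on $D\# D$ coincide with those of the same surgery on $T_{2,5}$, and $\Sigma_p=S^3_r(D\# D)$ with $r=(2p+1)^2/(2p)$, we have $d(\Sigma_p,i_k)=d(S^3_r(T_{2,5}),i_k)$. Because $r=2p+2+1/(2p)>3=2\,g(T_{2,5})-1$, formula \eqref{eq:CTOLSK} applies with numerator $(2p+1)^2$ and denominator $q=2p$, giving
\[ d(\Sigma_p,i_k)=d(S^3_r(U),i_k)-2\,t_{\left|\left\lfloor i_k/(2p)\right\rfloor\right|}(T_{2,5}). \]
Thus the computation splits into understanding the lens space term $d(S^3_r(U),i_k)$ and the torsion term.

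The key observation, and the step I expect to be the main obstacle, is that the lens space term vanishes identically on our labels. Replacing $D$ by $U$ in the construction of $K_p$ yields the ribbon knot $R_p$, whose $2$-fold branched cover is exactly $S^3_r(U)=\Sigma(R_p)$ (the surgery coefficient depends only on the band framings, which are unchanged). Since $R_p$ is ribbon, $\Sigma(R_p)$ bounds a rational homology $4$-ball $W$, so its correction terms vanish on a metabolizer of the linking form, namely on the $\spinc$ structures $\sss_0+\widehat z$ with $z\in\ker\bigl(H_1(\Sigma(R_p))\to H_1(W)\bigr)$. As $2p+1$ is prime this kernel is the unique metabolizer $G$, and by Lemma~\ref{lem:CentralLabel} together with Appendix~\ref{app:spinc} the labels $i_k$ enumerate precisely $\sss_0+\widehat z$ for $z\in G$. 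Hence $d(S^3_r(U),i_k)=0$ for every $k$. (Alternatively, since a slice knot lies in $\caln_1\cap\calp_1$, one may deduce $d\geq 0$ and $d\leq 0$ on $G$ from Theorem~\ref{thm:T1Obstruction} and \cite[Theorem 6.2]{CHH:Filt}, again forcing equality.)

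It then remains to evaluate the torsion term, a finite computation. With $q=2p$ one finds $i_p=-1$ and $i_{p+1}=2p$, while $i_{p-1}=-(2p+2)<-2p$ and $i_{p+2}=4p+1\geq 4p$; since $i_k$ is strictly increasing in $k$, the inequality $-2p\leq i_k<4p$ — equivalently $\left|\left\lfloor i_k/(2p)\right\rfloor\right|\leq 1$ — holds exactly for $k=p$ and $k=p+1$, where the floor equals $-1$ and $1$ respectively. Using $t_0=t_1=1$ and $t_j=0$ for $j\geq 2$, the torsion coefficient equals $1$ when $k\in\{p,p+1\}$ and $0$ otherwise. Substituting into the displayed formula yields $d(\Sigma_p,i_k)=-2$ for $k=p,p+1$ and $d(\Sigma_p,i_k)=0$ for all other $k$, as claimed.
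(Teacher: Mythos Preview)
Your proof is correct and follows essentially the same approach as the paper's: both reduce to the rational surgery formula, kill the lens space term by recognizing $S^3_r(U)$ as the double branched cover of the ribbon knot $R_p$ (the paper cites \cite[Theorem 2.3]{JabNai} directly, while you spell out the rational homology ball argument and use uniqueness of the metabolizer), and then compute the floor $\lfloor i_k/(2p)\rfloor$. The only cosmetic difference is that the paper computes the floor via the algebraic identity $i_k/(2p)=-p+k+(k-p-1)/(2p)$, whereas you use monotonicity of $i_k$ together with the explicit values $i_{p-1},i_p,i_{p+1},i_{p+2}$; both arrive at $\lvert\lfloor i_k/(2p)\rfloor\rvert=1$ exactly when $k\in\{p,p+1\}$.
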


Note that it follows immediately from Theorem~\ref{thm:T1Obstruction} that $K_p$ is not in $\caln_1$, if $2p+1$ is prime, hence not in $\calt_1$.

\begin{proof}
	The correction terms $d\left( S^3_{(2p+1)^2/2p}(U) , i_k \right)$ must vanish, for $S^3_{(2p+1)^2/2p}(U)$ is the double branched cover of the ribbon knot $R_p$, and the correction terms of the double branched cover of a ribbon knot (with $\spinc$ structures corresponding to a metabolizing subgroup of $H_1$) must vanish~\cite[Theorem 2.3]{JabNai}.  Thus, by Equation~\ref{eq:CTOLSK}, $d\left( S^3_{(2p+1)^2/2p}(T_{2,5}) , i_k \right) = -2\,t_{\left| \left\lfloor i_k/2p \right\rfloor \right|} \left( T_{2,5} \right)$.
	
	Now 
	\begin{eqnarray*}
		\left\lfloor \frac{i_k}{2p} \right\rfloor &=& \left\lfloor \frac{-2p^2-p-1 + (2p+1)k}{2p} \right\rfloor\\
		&=& \left\lfloor -p +k + \frac{k-p-1}{2p} \right\rfloor\\
		&=& -p+k + \left\lfloor \frac{k-p-1}{2p} \right\rfloor\\
		&=& \begin{cases}
				k-p-1, & \textrm{if }0\leq k \leq p\\
				k-p, & \textrm{if }p<k\leq 2p
			\end{cases}
	\end{eqnarray*}
	
	Recall that $t_j(T_{2,5})$ is nonzero if and only if $j=0$ or $j=1$, so it suffices to find the $j=\left|\left\lfloor i_k/2p \right\rfloor \right|$ equal to $0$ or $1$.  By the above calculation, this happens only if $k=p$ or $p+1$, in which case $\left| \left\lfloor i_p/2p \right\rfloor \right| = \left| \left\lfloor i_{p+1}/2p \right\rfloor \right| = 1$.
	
	The lemma follows.  (Note that the $\spinc$ structures $i_p$ and $i_{p+1}$ are `conjugate' $\spinc$ structures; this is reflected by the fact that $- p(2p+1) \equiv (p+1)(2p+1) \textrm{ mod } (2p+1)^2$.)
\end{proof}

%  We will establish that no nontrivial linear combination of the $K_p$ lies in $\calt_1$ after a digression on metabolizers.

%\subsection{On metabolizers}\label{sec:Meta}
%
%Suppoe $\calp$ is a subset of the natural numbers so that $2p+1$ is prime for each $p\in\calp$.  Consider a connected sum of the knots $K_p$: \[ K := \consum_{p\in\calp} n_p \, K_p \]  If $\Sigma$ denotes the double branched cover of $K$, then \[ \Sigma = \consum_{p\in\calp} n_p \, \Sigma_p \] as an oriented manifold.  It is known that a metabolizer $M$ for $H_1(\Sigma)$ splits as $M=\oplus_{p\in\calp} M_p$ where $M_p$ is a metabolizer for $n_p \, \Sigma_p$, since the primes $2p+1$ are distinct.  In fact $M_p \subset \left(\bbz/(2p+1)^2\bbz\right)^{n_p}$ has order $(2p+1)^{n_p}$, and any such subgroup must contain an element $z=(b_1,\ldots, b_{n_p})$ where at least $\left\lfloor n_p/2 \right\rfloor$ of the $b_i$ are equal to $2p+1\in\bbz/(2p+1)^2\bbz$ and every other $b_j$ is some multiple of $2p+1$ (cf.~\cite[Theorem A.2]{HedLivRub}).

\section{Proof of Theorems~\ref{thm:MainTheoremBipolar} and~\ref{thm:MainTheoremAlex} }\label{sec:proof}

Let $\calp$ be the subset of natural numbers $p \geq 3$ such that $2p+1$ is prime.  We showed in Section~\ref{sec:zerobipolar} that each $K_p\in\calt_0$.  We will show that the set $\{K_p\}_{p\in\calp}$ is linearly independent in $\calt_0/\calt_1$. Consider a linear combination: \[ K := \consum_{p\in\calp} n_p \, K_p \]   Suppose $K\in \calt_1$.  Without loss of generality, we may assume that some $n_q > 0$.  We saw in Section~\ref{sec:zerobipolar} that $K_q \in \calp_2 \subset \calp_1$, so $-K_q \in \caln_1$.  Thus $J := K \# -(n_q - 1)K_q \in \caln_1$, which we will endeavor to contradict.  Note that \[ J = K_q \consum_{p\in\calp'} n_p\,K_p \] where $\calp' = \calp  - \{q\}$.

If $\Sigma$ denotes the double branched cover of $J$, then \[ \Sigma = \Sigma_q\consum_{p\in\calp'} n_p \, \Sigma_p \] as an oriented manifold.  It is known that a metabolizer $M$ for $H_1(\Sigma)$ splits as $M = M_q\oplus_{p\in\calp'} M_p$ where $M_p$ is a metabolizer for $n_p \, \Sigma_p$, since the primes $2p+1$ are distinct.  Recall that $\spinc$ structures on a connected sum split into `sums' of $\spinc$ structures on each summand, and the $d$-invariants respect this decomposition~\cite[Theorem 4.3]{OzSz:AbsGrad}. Since $H_1(\Sigma_q)$ has order the square of the prime $2q+1$, $M_q$ is necessarily the subgroup generated by $2q+1$. In particular, for each $k=0,\ldots,2q$ the element $(k(2q+1), \vec 0)\in H_1(\Sigma)\cong \bbz/(2q+1)^2\bbz \,\bigoplus_{p \in \calp'}\, \left( \bbz/(2p+1)^2\bbz \right)^{|n_p|}$ lies in the metabolizer $M$.  We are interested in the $\spinc$ structure $\sss = \sss_0 + \mathrm{PD}((k_q(2q+1), \vec 0)) = \left(\sss_0+\mathrm{PD}(k_q(2q+1))\right) \# \sss_0$ on $\Sigma_q \consum_{p\in\calp'} n_p \, \Sigma_p$ for which the label of $\sss_0+ \mathrm{PD}(k_q(2q+1))\in \spinc(\Sigma_q)$ is the integer $i_q$ as in Lemma~\ref{lem:CorTermCalc}.   We can now calculate: %the $d$-invariant of $\Sigma$ with the $\spinc$ structure $\sss$:

\begin{eqnarray*}
	d(\Sigma, \sss) &=& d\left(\Sigma_q \consum_{p\in\calp'} n_p \, \Sigma_p , (\sss_0+\mathrm{PD}(k_q(2q+1)))\ \#\ \sss_0\right)\\
	&=& d\left(\Sigma_q, \sss_0 + \mathrm{PD}(k_q(2q+1))\right) + d\left(\consum_{p\in\calp'} n_p \, \Sigma_p , \sss_0\right)\\
	&=& d\left( \Sigma_q, i_q \right) + \sum_{p\in\calp'} n_p \, d\left( \Sigma_p, i_0 \right)\\
	&=& -2
\end{eqnarray*}
where the last two lines follow (notationally and logically) from Lemma~\ref{lem:CorTermCalc}.

By Theorem~\ref{thm:T1Obstruction}, we have contradicted that $J \in \caln_1$, and we conclude that the $K_p$ are linearly independent in $\calt_0/\calt_1$, finishing the proof of Theorem~\ref{thm:MainTheoremBipolar}.

More generally, to see that $\{K_p\}$ is linearly independent in $\calt/\langle\calt_1,\Delta\rangle$, suppose that $\displaystyle T\, \consum_{p\in\calp} n_p \, K_p\in \calt_1$  for some $T$ with $\Delta_T(t)=1$.  Again we may assume $n_q > 0$ for some $q$. Since  $\pm K_p\in\calt_0$ it follows that $T\in\calt_0$. Since $\Delta_T(t)=1$, $\Sigma_T$ is a homology sphere. Hence, by ~\cite[Corollary 6.11]{CHH:Filt}, $d(\Sigma_T, \sss_0) = 0$.  Now, as above, we may add enough mirror images of $K_q$ to arrive at the conclusion \[ J := T\, \#\, K_q\, \consum_{p\in\calp'} n_p\,K_p \in \caln_1 \]  Using the same $d$-invariant argument as for the $\calt_0/\calt_1$ case, together with the facts that $H_1(\Sigma_T)=0$ and $d(\Sigma_T, \sss_0) = 0$, we can, given any metabolizer for $H_1(\Sigma_J)$, find a $\spinc$ structure $\sss$ for $\Sigma_J$ which corresponds to some element in that metabolizer and which satisfies $d(\Sigma_J, \sss) = -2$.  By Theorem~\ref{thm:T1Obstruction}, this contradicts that $J \in \caln_1$.  This finishes the proof of Theorem~\ref{thm:MainTheoremAlex}.

%To argue that $K_p$ are linearly independent in $\calt/\Delta$, we may not assume that $n_q = 1$, but we will assume $n_q > 0$.  One must take a bit more care and analyze the elements of a metabolizer $M_q$ for $H_1(n_q\, \Sigma_q) \cong \left(\bbz/(2q+1)^2\bbz\right)^{|n_q|}$.  We refer the reader to~\cite[Theorem A.2]{HedLivRub}, which implies that any such metabolizer $M_q$ contains an element $z=(b_1,\ldots,b_{|n_q|})$ where some nonzero number $m > 0$ of the $b_i$ are equal to $q(2q+1)$ and the other $b_j$ are other multiples of $2q+1$.  By Lemma~\ref{lem:CorTermCalc}, the $d$-invariant associated to $z$ is $d(n_q\,\Sigma_q, \sss_0+\widehat{z}) = m\,d(\Sigma, i_q) + n\,d(\Sigma_q, i_{q+1}) = -2(m+n)$, where $n\geq 0$.  Thus the reduced $d$-invariant of the knot $K$ above is nonzero, which obstructs $K$ from being concordant to a knot which has Alexander polynomial one~\cite[Corollary 3.4]{HedLivRub}.

\section{Appendix on $CFK^\infty(D\# D)$}\label{sec:cfkinfty}

Recall that $D$ denotes the untwisted, positively clasped Whitehead double of $T_{2,3}$.  In this appendix, we establish

\begin{lem}\label{lem:2DisT25}
	For any $p/q \in \bbq - \{0\}$ and $0\leq i \leq p-1$, $d\left( S^3_{p/q}(D\# D), i\right) = d\left(S^3_{p/q}(T_{2,5}), i\right)$.
\end{lem}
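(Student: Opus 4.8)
The plan is to reduce the statement to a comparison of the full knot Floer complexes $\cfkinfty(D\# D)$ and $\cfkinfty(T_{2,5})$, exploiting that the correction terms of \emph{all} rational surgeries are computed from $\cfkinfty$ alone. Via the Ozsv\'ath--Szab\'o mapping cone formula for rational surgeries, $d\left(S^3_{p/q}(K),i\right)$ depends only on the filtered chain homotopy type of $\cfkinfty(K)$; more precisely it is determined by the integers $V_s(K)$ (the $U$-power shifts of the vertical maps $\hat A_s^+\to B^+$), together with the conjugation symmetry $H_s=V_{-s}$ that governs the negative surgeries. Crucially, these $V_s$ are unchanged under deleting a direct-summand-complement that is acyclic over $\bbq[U,U^{-1}]$: such a summand contributes only to the $U$-torsion part of $H_*(\hat A_s^+)$ and never to the tower whose shift defines $V_s$. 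So it suffices to show that $\cfkinfty(D\# D)$ and $\cfkinfty(T_{2,5})$ have the same $V_s$ for every $s$.

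First I would invoke Hedden's computation of the knot Floer homology of Whitehead doubles to pin down $\cfkinfty(D)$. For the untwisted, positively clasped double $D$ of $T_{2,3}$ one has $\widehat{HFK}(D)$ of ranks $(2,3,2)$ in Alexander gradings $(1,0,-1)$ and $\Delta_D(t)=1$; the filtered complex splits as $\cfkinfty(D)\cong \mathrm{St}\oplus B$, where $\mathrm{St}$ is the three-generator staircase of $T_{2,3}$ and $B$ is a single acyclic ``box'' (a square) supported in Alexander gradings $1,0,0,-1$. (The box carries graded Euler characteristic $2-t-t^{-1}$, which is exactly what is needed so that $\chi(\mathrm{St})+\chi(B)=\Delta_D=1$.) By the K\"unneth theorem for connected sums, $\cfkinfty(D\# D)\cong \cfkinfty(D)^{\otimes 2}\cong \mathrm{St}^{\otimes 2}\oplus\bigl(\mathrm{St}\otimes B\bigr)\oplus\bigl(B\otimes \mathrm{St}\bigr)\oplus B^{\otimes 2}$. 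Every summand after the first contains a tensor factor of $B$ and is therefore acyclic, so by the previous paragraph $V_s(D\# D)=V_s\bigl(\mathrm{St}^{\otimes 2}\bigr)=V_s(T_{2,3}\# T_{2,3})$ for all $s$.

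It then remains to compute the tensor square $\mathrm{St}^{\otimes 2}$ (nine generators) and compare with $T_{2,5}$. Here I would perform filtered changes of basis to peel off acyclic ``vertical domino'' summands, reducing $\mathrm{St}^{\otimes 2}$ to a three-generator staircase $E$ whose corner sits one lattice step above that of $T_{2,3}$; a direct calculation of the maps $\hat A_s^+\to B^+$ for $E$ gives $V_0=V_1=1$ and $V_s=0$ for $s\ge 2$, which is precisely the torsion-coefficient data $t_0=t_1=1$, $t_{\ge 2}=0$ of $T_{2,5}$ recorded in \eqref{eq:CTOLSK}. Since the $V_s$ (hence all rational-surgery correction terms, for every $p/q\ne 0$ and every label $i$) agree for $D\# D$ and $T_{2,5}$, the lemma follows. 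The main obstacle is twofold: first, establishing the precise filtered splitting $\cfkinfty(D)\cong\mathrm{St}\oplus B$ from Hedden's grading data (rather than merely matching total ranks); and second, carrying out the reduction of $\mathrm{St}^{\otimes 2}$ using filtered moves only --- naive Gaussian elimination along a horizontal or vertical arrow can silently change $\tau$ and the $V_s$, so one must verify at each step that the cancellation does not alter the tower carrying $HF^\infty$.
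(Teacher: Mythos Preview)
Your strategy is exactly the paper's: split $\cfkinfty(D)\cong\cfkinfty(T_{2,3})\oplus(\text{acyclic})$, apply K\"unneth, identify $\cfkinfty(T_{2,3})^{\otimes 2}$ with $\cfkinfty(T_{2,5})$ up to an acyclic summand, and then argue that acyclic summands do not affect correction terms of nonzero rational surgeries. The paper packages the last step slightly differently---it works directly with the quotient complex computing $HF^+$ of large integral surgery and then cites the discussion around \cite[Proposition~2.2 and Corollary~2.3]{RubStr} for the passage to arbitrary nonzero $p/q$---whereas you phrase it via the integers $V_s$; these are equivalent.

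There is one genuine slip in your sketch. The tensor square $\mathrm{St}^{\otimes 2}$ has nine generators, and after a filtered change of basis it decomposes as the \emph{five}-generator staircase of $T_{2,5}$ (generators at $(0,2),(1,2),(1,1),(2,1),(2,0)$) together with a single acyclic four-generator box, not as a three-generator staircase plus vertical dominoes. Concretely, with $\partial b=a+c$ in $\mathrm{St}$, the staircase is $\{aa,\,ab,\,ac,\,bc,\,cc\}$ and the box is $\{bb,\,ab+ba,\,bc+cb,\,ac+ca\}$. No three-generator staircase has $V_0=V_1=1$; the values you quote are correct precisely because the reduced complex is that of $T_{2,5}$. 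Your cautionary remark about Gaussian elimination is well taken but not actually an obstacle here: the box is an honest filtered direct summand, so deleting it is a filtered chain homotopy equivalence and the $V_s$ are unchanged.
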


To any knot $K$ in $S^3$, one can associate the $\bbz\oplus\bbz$-filtered knot Floer chain complex $\cfkinfty(K)$ introduced by Rasmussen~\cite{Ras:Thesis} and Ozsv\'ath and Szab\'o~\cite{OzSz:KnotInv}.  It is known that the complex $\cfkinfty(D)$ splits as \[ \cfkinfty(D) = \cfkinfty(T_{2,3}) \oplus A \] where $A$ is an acyclic complex~\cite[Appendix 9.1]{CHH:Filt} (cf.~\cite[Lemma 6.12]{Hom:CFKandConc}).  The complex of the connected sum of knots is the tensor product of the individual knots' complexes~\cite[Theorem 7.1]{OzSz:KnotInv}, so we see that \[ \cfkinfty(D\# D) = \left(\cfkinfty(T_{2,3})\otimes\cfkinfty(T_{2,3})\right)\oplus A \] where $A$ is acyclic.  The tensor product of the $T_{2,3}$ complexes is (up to an acyclic summand) $\cfkinfty(T_{2,5})$.  This is most easily seen by a direct computation.  The complexes are pictured in Figure~\ref{fig:complexes} (for simplicity, we have collapsed all $U$ and $U\inv$ translates in the figure).

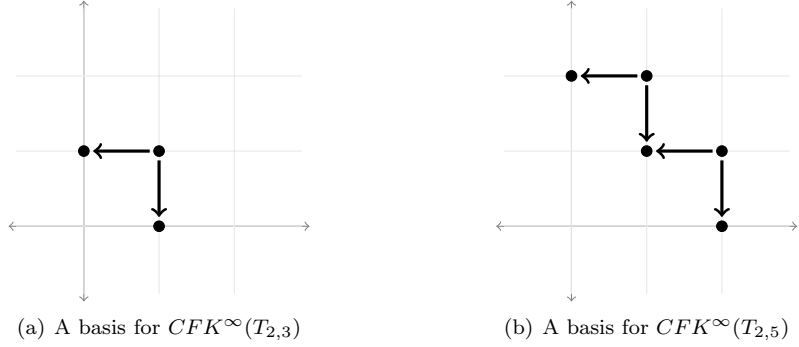
\begin{figure}[!ht]
	\centering
			\subfigure[A basis for $CFK^\infty(T_{2,3})$]
			{
				\begin{tikzpicture}
					\begin{scope}[thin, gray]
						\draw [<->] (-1, 0) -- (3, 0);
						\draw [<->] (0, -1) -- (0, 3);
					\end{scope}
					\draw[step=1, black!10!white, very thin] (-0.9, -0.9) grid (2.9, 2.9);
					\filldraw (0, 1) circle (2pt) node[] (x0){};
					\filldraw (1, 1) circle (2pt) node[] (x1){};
					\filldraw (1, 0) circle (2pt) node[] (x2){};
					\draw [very thick, <-] (x0) -- (x1);
					\draw [very thick, <-] (x2) -- (x1);
				\end{tikzpicture}
			}
			\hspace{2cm}
			\subfigure[A basis for $CFK^\infty(T_{2,5})$]
			{
				\begin{tikzpicture}
					\begin{scope}[thin, gray]
						\draw [<->] (-1, 0) -- (3, 0);
						\draw [<->] (0, -1) -- (0, 3);
					\end{scope}
					\draw[step=1, black!10!white, very thin] (-0.9, -0.9) grid (2.9, 2.9);
					\filldraw (0, 2) circle (2pt) node[] (x0){};
					\filldraw (1, 2) circle (2pt) node[] (x1){};
					\filldraw (1, 1) circle (2pt) node[] (x2){};
					\filldraw (2, 1) circle (2pt) node[] (x3){};
					\filldraw (2, 0) circle (2pt) node[] (x4){};
					\draw [very thick, <-] (x0) -- (x1);
					\draw [very thick, <-] (x2) -- (x1);
					\draw [very thick, <-] (x2) -- (x3);
					\draw [very thick, <-] (x4) -- (x3);
				\end{tikzpicture}
			}
			\caption{The knot Floer complexes of $T_{2,3}$ and $T_{2,5}$}
			\label{fig:complexes}
\end{figure}

% SVG figure

%\begin{figure}[!ht]
%	\begin{center}
%		\begin{overpic}[scale=.8]{complexes}
%			\put(-2,20){$1$}
%			\put(20,-2){$1$}
%			\put(56,38){$2$}
%			\put(56,20){$1$}
%			\put(95,-2){$2$}
%			\put(76,-2){$1$}
%		\end{overpic}
%		\caption{The knot Floer complexes of $T_{2,3}$ and $T_{2,5}$}
%		\label{fig:complexes}
%	\end{center}
%\end{figure}

Ozsv\'ath and Szab\'o proved~\cite[Theorem 4.4]{OzSz:KnotInv} that given a sufficiently large, integral surgery coefficient $r$, $HF^+(K_r, \sss)$ is isomorphic to the homology of a quotient complex of $\cfkinfty(K)$.  (In fact, the same is true for a sufficiently negative, integral coefficient -- one must use a different quotient complex.  The argument below works for either case.)  Recall that \[ d(K_r, \sss) := \min_{0 \neq \a \in HF^+(K_r, \sss)} \left\{\mathrm{gr}(\a):\ \a\in\mathrm{im}\ U^k \textrm{ for all }k\geq 0\right\} \]

If $A$ is an acyclic summand of $\cfkinfty(K)$ (as a $\bbz/2\bbz[U,U\inv]$-module), we claim that $A$ does not affect the $d$-invariants.  Let $Q$ denote the quotient complex of $\cfkinfty(K)$ whose homology is isomorphic to $HF^+(K_r,\sss)$.  By definition~\cite[Theorem 4.4]{OzSz:KnotInv}, $Q$ is the quotient of $\cfkinfty(K)$ by a subcomplex that is $U$-invariant.  Suppose $a\in A$ with $0\neq [a] \in H_\ast(Q)$.  Let $\del$ denote the differential in $\cfkinfty(K)$ and $\del_Q$ the induced differential in $Q$.  If $[a] \in \mathrm{im}\ U^k$ for all $k\geq 0$, then there is a $B\in\cfkinfty(K)$ and a $k \geq 0$ with $U^k[B]=[a]\in H_\ast(Q)$ and $\del B = \del_Q B = 0$.  On the chain level, we have $U^k\,B = a + \del_Q x$ for some $x\in Q$, and so $U^k\,B = a + \del x + y \in \cfkinfty(K)$ for some $y\in\ker(\mathrm{projection})$.  Thus $0 = \del B = U^{-k} \left( \del a + \del^2 x + \del y \right) = U^{-k}\left( \del a + \del y \right)$, from which it follows that $\del a = \del y$.  Since $a$ lies in the summand $A$, so does $y$.  Then $a+y$ is a cycle in $\cfkinfty(K)$ and must be a boundary since $A$ is acyclic.  If $\del \theta = a+y$, then $\del_Q \theta = a$, and so $0=[a]=\in H_\ast(Q)$, a contradiction.

Thus, acyclic summands in $\cfkinfty$ do not contribute to $d$-invariants of sufficiently large  or sufficiently negative integer surgeries.  In fact, acyclic summands do not affect the $d$-invariants of any nonzero rational surgery (see the discussion surrounding~\cite[Proposition 2.2]{RubStr} and in particular the proof of~\cite[Corollary 2.3(1)]{RubStr}).

\section{Appendix on $\spinc$ structures of lens spaces}\label{app:spinc}

Recall that $S^3_{p/q}(U)=-L(p,q)$.  Ozsv\'ath and Szab\'o give a recursive formula for the $d$ invariants of lens spaces~\cite[Equation 12]{OzSz:AbsGrad}.  Preceding their proof, they give an ordering (or labeling) of the $\spinc$ structures of $-L(p,q)$ in terms of its standard genus one Heegaard diagram $(\mathbb{T}^2, \a, \gamma, z)$ (in the notation of~\cite[Figure 2]{OzSz:AbsGrad} or Figure~\ref{fig:L53}).  Recall that for such a pointed Heegaard diagram, a point in $\a\cap\gamma$ determines a $\spinc$ structure on $-L(p,q)$.  Ozsv\'ath and Szab\'o label the $p$ intersection points of $\a$ and $\gamma$ circularly about $\a$ (using the basepoint $z$ to determine which point is labeled by $0$), and hence give a labeling of the $\spinc$ structures $\ell: \bbz/p\bbz = \{0, 1, \ldots p-1\} \to \spinc(-L(p,q))$.   This construction extends (by using a Heegaard triple diagram  adapted to the surgery on a knot) to give an identification of $\spinc(S^3_{p/q}(K)) \leftrightarrow \bbz/p\bbz$ for any $K$.  Let $\mu$ denote the closed curve comprising the left edge of the picture of the torus in Figure~\ref{fig:L53}; an orientation on $\mu$ determines a generator of $H_1(-L(p,q))\cong \bbz/p\bbz$.

\begin{figure}[!ht]
	\begin{center}
		\begin{overpic}{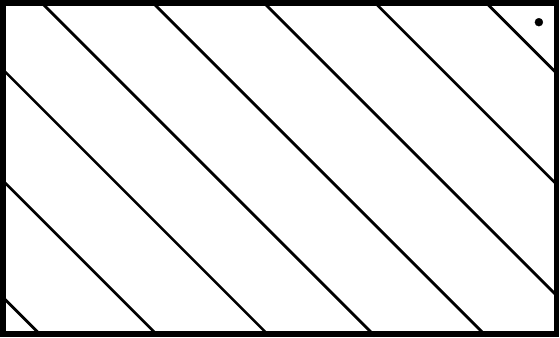}
			\put(5, -6){$0$}
			\put(25, -6){$1$}
			\put(45, -6){$2$}
			\put(65, -6){$3$}
			\put(85, -6){$4$}
			\put(96,53){\tiny $z$}
			\put(50,29){$\gamma$}
			\put(12,3){$\a$}
		\end{overpic}
		\caption{The labeling of intersection points on the Heegaard diagram for $-L(5,3)$}
		\label{fig:L53}
	\end{center}
\end{figure}

The set $\spinc(-L(p,q))$ is an affine set over $H^2(-L(p,q))$ (cf.~\cite[Section 2.6]{OzSz:3Man}), which allows one to take the `difference' of $\spinc$ structures.  If $\sss(\mathbf{x})$ and $\sss(\mathbf{y})$ are the $\spinc$ structures corresponding to $\mathbf{x}$ and $\mathbf{y}\in \a\cap\gamma$, then
\begin{equation}\label{eq:SpinCDifference}
	\sss(\mathbf{y}) - \sss(\mathbf{x}) = \mathrm{PD}(\epsilon(a - c))
\end{equation}
where $a$ (respectively, $c$) is a 1-chain corresponding to a path from $\mathbf{x}$ to $\mathbf{y}$ in $\a$ (respectively, $\gamma$) and $\epsilon$ is the map $H_1(\mathbb{T}^2) \twoheadrightarrow H_1(L(p,q))$ induced by attaching the $\a$ and $\gamma$ handles~\cite[Lemma 2.19]{OzSz:3Man}.

Since $q$ is relatively prime to $p$, there exists $k = q\inv \pmod p$.  One may check easily that the homology class $\epsilon(i,i+1) = k\cdot\mu$ where $\mu$ is the generator of $H_1(-L(p,q))$ discussed above (after picking the appropriate orientation).  See Figure~\ref{fig:L53} for an example.  One may generalize this fact to see $\epsilon(i,j)=(j-i)k\cdot\mu$ for any $i,j\in\bbz/p\bbz$.  By Equation~\ref{eq:SpinCDifference} \[ \ell(j) - \ell(i) = (j-i)k\cdot\mathrm{PD}(\mu) \]  In this sense, the circular ordering of (the labels of the) $\spinc$ structures affinely respects the group structure of $H^2(-L(p,q))$. In other words, if we fix the affine identification $\phi_0:\spinc(-L(p,q)) \to H^2(-L(p,q))$ with $\phi_0(\ell(0)) = 0$, then then the composition  $\phi_0\circ\ell: \bbz/p\bbz \to H^2(-L(p,q))$ is the group isomorphism given by $1\mapsto k\cdot\mathrm{PD}(\mu)$.

In $p$ is odd, there is a unique $\spin$ structure on $-L(p,q)$, which in turn determines a $\spinc$ structure $\sss'$, called the \emph{central $\spinc$ structure}.  The central $\spinc$ structure is the only $\spinc$ structure with trivial $c_1$.  In this case $c_1$ is a one-to-one correspondence.  \[\bbz/p\bbz \xrightarrow{\ \ \ell\ \ } \spinc(-L(p,q)) \xrightarrow{\ \ c_1\ \ } H^2(-L(p,q)) \]

In order to use Equation~\ref{eq:CTOLSK} for those $\spinc$ structures corresponding to a metabolizer for $H_1(-L(p,q))$, we must determine the Ozsv\'ath-Szab\'o labels of these $\spinc$ structures.  Since $p = t^2$ for some odd prime $t$, there is only one subgroup of $H^2$ which has order $t$.  Thus, the labels of the $\spinc$ structures corresponding to this subgroup are $i'+ m\,t$ ($m=0,\ldots,t-1$) where $i' = \ell\inv(\sss')$ is the label of the central $\spinc$ structure $\sss'$. 

It remains to determine $i'$.

\begin{lem}\label{lem:CentralLabel}
	For $p$ odd, the label $i'$ of the central $\spinc$ structure $\sss'$ is given by the mod $p$ reduction of $\displaystyle \frac{(p+1)(q-1)}{2}$.
\end{lem}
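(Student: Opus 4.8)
The plan is to use the affine description of the labeling already recorded above, together with the single characterizing property of the central structure $\sss'$, namely $c_1(\sss')=0$. Write $g=\mathrm{PD}(\mu)$ for the fixed generator of $H^2(-L(p,q))\cong\bbz/p\bbz$. Recall the standard identity $c_1(\sss)-c_1(\mathfrak{t})=2(\sss-\mathfrak{t})$ relating the first Chern class to the affine action of $H^2$ on $\spinc(-L(p,q))$. Combining this with the difference formula $\ell(j)-\ell(i)=(j-i)k\cdot g$ gives
\[ c_1(\ell(i))=c_1(\ell(0))+2ik\cdot g. \]
Since $p$ is odd, $2$ and $k$ are invertible modulo $p$, so the equation $c_1(\ell(i'))=0$ has a unique solution; thus $i'$ is pinned down as soon as the single class $c_1(\ell(0))$ is known.

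To avoid computing $c_1(\ell(0))$ head-on, I would exploit conjugation. From $c_1(\overline{\sss})=-c_1(\sss)$ and the displayed identity one checks that conjugation is affine with linear part $-1$; transported to labels by the group isomorphism $\phi_0\circ\ell$, it therefore has the form $i\mapsto b-i$ for some constant $b\in\bbz/p\bbz$. Because $p$ is odd, $H^2$ has odd order, so conjugation has a unique fixed $\spinc$ structure; as the $\spin$ structure is self-conjugate and $2c_1(\sss')=0$ forces $c_1(\sss')=0$, this fixed structure is exactly the central one. Hence $i'$ is the unique fixed point of $i\mapsto b-i$, namely $i'\equiv b/2\pmod p$, and the problem reduces to determining the single constant $b$ (equivalently $c_1(\ell(0))=-bk\cdot g$).

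Determining $b$ is the crux, and it is the one place where differences do not suffice and an absolute input about the Ozsv\'ath--Szab\'o labeling is required. I would pin it down by identifying the conjugation involution on their circular labels as $i\mapsto p+q-1-i$, so that $b=p+q-1\equiv q-1\pmod p$. This can be read off from the symmetry $i\leftrightarrow p+q-1-i$ of the term $(2i+1-p-q)^2$ appearing in their recursive $d$-invariant formula (conjugate structures have equal correction terms), or, more intrinsically, by a direct first-Chern-class computation from the genus one Heegaard diagram of Figure~\ref{fig:L53}, expressing $c_1(\ell(0))$ as the Poincar\'e dual of the explicit $1$-cycle attached to the basepoint $z$. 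With $b\equiv q-1$ in hand, $i'$ is the solution of $2i'\equiv q-1\pmod p$; since $p$ is odd, $(p+1)(q-1)/2$ is an integer and satisfies $2\cdot\frac{(p+1)(q-1)}{2}=(p+1)(q-1)\equiv q-1\pmod p$, so $i'\equiv\frac{(p+1)(q-1)}{2}\pmod p$, as claimed. As a sanity check, for $-L(5,3)$ this gives $i'=1$, matching the central label in Figure~\ref{fig:L53}.

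The main obstacle, as indicated, is this absolute anchoring: every step except the determination of $b$ is formal and uses only the affine/group structure already established, but fixing $b$ (equivalently $c_1(\ell(0))$) requires care with Ozsv\'ath--Szab\'o's orientation and basepoint conventions, since a sign error there would replace $(q-1)/2$ by $(1-q)/2$.
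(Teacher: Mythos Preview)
Your reduction via conjugation is correct and elegant: once you know that conjugation acts on labels as $i\mapsto b-i$, the central label is the unique fixed point $i'\equiv b/2\pmod p$, and the arithmetic in your last paragraph is fine. This is a genuinely different route from the paper's. The paper does not use conjugation at all; instead it builds the standard $2$-handle cobordism $W$ from $L(q,t)$ to $L(p,q)$ and invokes the Ozsv\'ath--Szab\'o identity $\langle c_1(\sss_z(\psi_i)),H(\calp)\rangle=2i+1-p-q$ from the proof of their Proposition~4.8. Since the restriction $H^2(W)\to H^2(L(p,q))$ kills exactly the multiples of $p$, the condition $c_1(\ell(i))=0$ becomes $2i+1-p-q\equiv 0\pmod p$, i.e.\ $2i\equiv q-1$, and $i'=(p+1)(q-1)/2$ drops out immediately. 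So the paper computes $c_1(\ell(i))$ absolutely, while you compute it only up to the unknown constant $b$ and then try to pin $b$ down.

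The weak point in your argument is precisely the determination of $b$, as you yourself flag. Your first proposal---reading $b=p+q-1$ off the symmetry $i\leftrightarrow p+q-1-i$ of the term $(2i+1-p-q)^2$ in the recursive $d$-formula---is not sufficient as stated. You have shown conjugation is \emph{some} involution $i\mapsto b-i$, and the formula exhibits \emph{an} involution $i\mapsto q-1-i$ preserving $d$; but equality of $d$-invariants does not by itself force these to coincide, since $d(\ell(i))=d(\ell(i+c))$ for all $i$ with $c=b-(q-1)$ is not obviously impossible (and in the application $p=(2p'+1)^2$ is not prime, so you cannot argue by cyclicity). To close this you would need either a uniqueness statement for $d$-preserving affine involutions on $-L(p,q)$, or to actually carry out the ``more intrinsic'' Heegaard-diagram computation of $c_1(\ell(0))$ that you allude to. Note that the quantity $2i+1-p-q$ you are quoting from the $d$-formula \emph{is} literally the $c_1$-evaluation on the cobordism, so the cleanest fix is exactly the paper's: use that identity directly rather than filtering it through the $d$-invariant symmetry.
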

\begin{proof}
	Throughout the proof we will ignore orientations of the boundary $3$-manifolds since we are only concerned with the zero element of $H^2$.
	
	Let $p > q > t > 0$ where $t$ is the mod $q$ reduction of $p$.  Then $p = nq + t$ for some $n > 0$.  $L(q,t)$ has a Dehn surgery description as $q/t$ surgery on the unknot.  Add a $4$-dimensional $2$-handle along the meridian with framing $-n$.  This determines a cobordism $W$ to $L(p,q)$ (to see this, perform the slam-dunk move~\cite{GompfStip}). Note that $H_1(W)=0$ and $H_2(W)\cong \mathbb{Z}$. It follows from duality and the universal coefficient theorem that that both $H^2(W)$ and $H^2(W,\partial(W))$ are infinite cyclic; and so the exact sequence on relative homology is \[ 0 \to H^2(W, \del W) \xrightarrow{\cdot pq} H^2(W) \twoheadrightarrow H_1(L(p,q))\oplus H_1(L(q,t)) \to 0 \]
	
	In the proof of~\cite[Proposition 4.8]{OzSz:AbsGrad}, Ozsv\'ath and Szab\'o discuss a $\spinc$ structure $\sss_z(\psi_i)$ on $W$ that restricts to $\sss_i$ on $L(p,q)$ that satisfies \[ \langle c_1(\sss_z(\psi_i)), H(\calp) \rangle = 2i + 1 - p - q \] where $H(\calp)$ is the generator of $H_2(W)$.
	
	Let $\xi$ denote a generator of $H^2(W)$. Since the Kronecker evaluation map is, in this case, an isomorphism, we can choose, $\xi$, a generator of $H^2(W)$ such that  $\langle \xi, H(\calp) \rangle = 1$. It follows that  $c_1(\sss_z(\psi_i)) = (2i+1-p-q)\xi$.
	
To see which $\sss_z(\psi_i)$ restricts to the central $\spinc$ structure on $L(p,q)$, we need to find the $i$ such that $j_1(c_1(\sss_z(\psi_i))) = 0$ in $H^2(L(p,q))$, where $j_1$ is the restriction to $L(p,q)$.  Note that if we take $i$ to be the mod $p$ reduction of $(p+1)(q-1)/2$ then $\langle c_1(\sss_z(\psi_i)), H(\calp) \rangle$ is a multiple of $p$ and so $c_1(\sss_z(\psi_i))$ is necessarily a multiple of $p\,\xi$. It follows that $j_1(c_1(\sss_z(\psi_i))) = 0$ in $H^2(L(p,q))$. Therefore the restriction of $\sss_z(\psi_i)$ to $-L(p,q)$ has $c_1=0$, which identifies this $\spinc$ structure as $s'$.
\end{proof}

\bibliographystyle{amsalpha}
\bibliography{PeterHornBib}
\end{document}